\newcommand*\patchAmsMathEnvironmentForLineno[1]{%
  \expandafter\let\csname old#1\expandafter\endcsname\csname #1\endcsname
  \expandafter\let\csname oldend#1\expandafter\endcsname\csname end#1\endcsname
  \renewenvironment{#1}%
     {\linenomath\csname old#1\endcsname}%
     {\csname oldend#1\endcsname\endlinenomath}}%
\newcommand*\patchBothAmsMathEnvironmentsForLineno[1]{%
  \patchAmsMathEnvironmentForLineno{#1}%
  \patchAmsMathEnvironmentForLineno{#1*}}%
\newtheorem{theorem}{Theorem}[section]
\newtheorem{lemma}[theorem]{Lemma}
\newcommand{\M}{\ensuremath{\mathcal{M}}}
\newcommand{\E}{\ensuremath{\mathcal{E}}}
\newcommand{\D}{\ensuremath{\mathcal{B}}}
\title{Center of maximum-sum matchings of bichromatic points}
\author{
	Pablo P\'erez-Lantero\thanks{Universidad de Santiago de Chile (USACH), Facultad de Ciencia, Departamento de Matem\'atica y Ciencia de la Computaci\'on, Chile {\tt pablo.perez.l@usach.cl}.}
	\and
	Carlos Seara\thanks{Departament de Matem\`{a}tiques,
		Universitat Polit\`{e}cnica de Catalunya, Spain.
		{\tt carlos.seara@upc.edu}.}
}
\begin{document}
\maketitle

%

\begin{abstract}
	Let $R$ and $B$ be two disjoint point sets in the plane with $|R|=|B|=n$. Let $\M=\{(r_i,b_i),i=1,2,\ldots,n\}$ be a perfect matching that matches points of $R$ with points of $B$ and maximizes $\sum_{i=1}^n\|r_i-b_i\|$, the total Euclidean distance of the matched pairs. In this paper, we prove that there exists a point $o$ of the plane (the center of $\M$) such that $\|r_i-o\|+\|b_i-o\|\le \sqrt{2}~\|r_i-b_i\|$ for all $i\in\{1,2,\ldots,n\}$.
\end{abstract}

\section{Introduction}

Let $R$ and $B$ be two disjoint point sets in the plane with $|R|=|B|=n$, $n\ge 1$. The points in $R$ are \emph{red}, and those in $B$ are \emph{blue}. A \emph{matching} of $R\cup B$ is a partition of $R\cup B$ into $n$ pairs such that each pair consists of a red and a blue point. A point $p\in R$ and a point $q\in B$ are \emph{matched} if and only if the (unordered) pair $(p,q)$ is in the matching. For every $p,q\in\mathbb{R}^2$, we use $pq$ to denote the segment connecting $p$ and $q$, and $\|p-q\|$ to denote its length, which is the Euclidean norm of the vector $p-q$. Let $\D(pq)$ denote the disk with diameter equal to $\|p-q\|$, that is centered at the midpoint $\frac{p+q}{2}$ of the segment $pq$. For any matching $\M$, we use $\D_\M$ to denote the set of the disks associated with the matching, that is, $\D_\M=\{\D(pq):(p,q) \in\M\}$.

In this note, we consider the \emph{max-sum} matching $\M$, as the matching that maximizes the total Euclidean distance of the matched points. As our main result, we prove the following theorem:

\begin{theorem}\label{theo:main}
	There exists a point $o$ of the plane such that for all $i\in\{1,2,\ldots,n\}$ we have:
	\[
		\|r_i-o\|+\|b_i-o\|~\le~ \sqrt{2}\, \|r_i-b_i\|.
	\]
\end{theorem}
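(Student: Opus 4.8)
The plan is to reduce Theorem~\ref{theo:main} to a Helly-type statement about the disks in $\D_\M$ and then to handle three disks at a time. First I would record the elementary observation that the desired bound is automatic whenever $o$ lies in the relevant disk: if $o\in\D(r_ib_i)$ then by Thales' theorem $\angle r_iob_i\ge \pi/2$, so $(o-r_i)\cdot(o-b_i)\le 0$, hence $\|o-r_i\|^2+\|o-b_i\|^2\le\|r_i-b_i\|^2$, and combining this with $(a+b)^2\le 2(a^2+b^2)$ gives $\|o-r_i\|+\|o-b_i\|\le\sqrt2\,\|r_i-b_i\|$. So it suffices to exhibit a point lying in every disk of $\D_\M$, i.e. to prove $\bigcap_{i=1}^n\D(r_ib_i)\neq\emptyset$. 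Since disks are convex planar sets, Helly's theorem reduces this to showing that \emph{any three} of them meet. Moreover, the restriction of a max-sum matching to any sub-collection of its pairs is again a max-sum matching (a swap among those pairs is a legal swap of $\M$, hence non-improving), so the whole problem comes down to a statement about six points: if $(r_1,b_1),(r_2,b_2),(r_3,b_3)$ is a max-sum matching, then $\D(r_1b_1)\cap\D(r_2b_2)\cap\D(r_3b_3)\neq\emptyset$.

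I would prove this six-point lemma in two steps, starting with \emph{pairwise intersection}. Write $m_i=\tfrac12(r_i+b_i)$ for the centre and $\rho_i=\tfrac12\|r_i-b_i\|$ for the radius of $\D(r_ib_i)$, and set $u_i=r_i-m_i$, so that $b_i=m_i-u_i$ and $\|u_i\|=\rho_i$. If $\D(r_ib_i)$ and $\D(r_jb_j)$ were disjoint, then $\|m_i-m_j\|>\rho_i+\rho_j$; with $w=m_i-m_j$ and $s=u_i+u_j$ one computes $\|r_i-b_j\|+\|r_j-b_i\|=\|w+s\|+\|w-s\|\ge 2\|w\|>2(\rho_i+\rho_j)=\|r_i-b_i\|+\|r_j-b_j\|$, so swapping $b_i\leftrightarrow b_j$ strictly increases the total length, contradicting max-sum. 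Hence the three disks intersect pairwise.

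For the \emph{common intersection}, suppose it is empty. If the three centres are collinear, three pairwise-intersecting disks with collinear centres always share a point (a one-dimensional Helly argument along the line of centres), so we may assume $m_1,m_2,m_3$ are not collinear. Minimising the convex, coercive function $x\mapsto\max_i\bigl(\|x-m_i\|^2-\rho_i^2\bigr)$ and inspecting its subgradient at a minimiser $x^\ast$, one finds: the minimum value is positive (otherwise $x^\ast$ is already the common point); at most two constraints cannot be active at $x^\ast$, because each segment $m_im_j$ is covered by $\D(r_ib_i)\cup\D(r_jb_j)$; hence all three are active, so $x^\ast$ has equal power with respect to the three circles and equals their radical centre $c$, which moreover lies in the interior of the triangle $m_1m_2m_3$, with common power $k=\|c-m_i\|^2-\rho_i^2>0$. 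Placing $c$ at the origin, this says $\|m_i\|^2=\rho_i^2+k$, equivalently $r_i\cdot b_i=k>0$, for $i=1,2,3$, with the origin enclosed by $m_1,m_2,m_3$. It then remains to contradict the max-sum property, i.e. to produce a transposition or a $3$-cycle of $b_1,b_2,b_3$ of strictly larger total length; concretely one wants $\sum_{i\neq j}\|r_i-b_j\|>2\sum_i\|r_i-b_i\|$.

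The hard part will be exactly this last step. The pairwise (transposition) inequalities alone do not suffice — one can satisfy all of them with the three disks still missing a common point — so the argument must combine them with the two $3$-cycle inequalities and genuinely exploit that the radical centre is surrounded by the centres and that $r_i\cdot b_i=k>0$. If the disk version turns out to be too tight to close, the identical two-step scheme applies verbatim to the ellipses $E_i=\{x:\|x-r_i\|+\|x-b_i\|\le\sqrt2\,\|r_i-b_i\|\}$, which contain the disks $\D(r_ib_i)$; the extra room should make the final quantitative inequality go through, and it still delivers precisely the constant $\sqrt2$ in the statement.
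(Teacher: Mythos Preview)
Your primary strategy --- proving that the three disks $\D(r_ib_i)$ have a common point --- is not salvageable: in the bichromatic setting the common intersection of the disks can genuinely be empty even when $\M$ is max-sum. The paper states this explicitly (citing Bereg et al.): the disks of a bichromatic max-sum matching intersect pairwise, ``but the common intersection is not always possible.'' The reason your radical-centre argument stalls is therefore not a missing trick; the six-point disk lemma you are trying to prove is false. The bichromatic constraint only forbids the three transpositions and the two $3$-cycles of the blue points, whereas in the uncolored case all five nontrivial permutations of three partners are available; that extra slack is exactly what lets the disks separate.

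Your fallback --- ``the identical two-step scheme applies verbatim to the ellipses $\E(r_ib_i)$'' --- is where the real work hides, and it does \emph{not} transfer verbatim. The power function $\|x-m_i\|^2-\rho_i^2$ and the radical centre are specific to circles; confocal ellipses have no analogous common point with a clean algebraic characterisation, so the minimiser analysis you sketch loses its structure. What the paper does instead is in the same spirit but technically different: it inflates the three ellipses simultaneously, replacing $\sqrt{2}$ by the smallest $\lambda>\sqrt{2}$ for which $\E_\lambda(aa')\cap\E_\lambda(bb')\cap\E_\lambda(cc')$ is nonempty, and studies the unique common point $o$. The key geometric input is the reflection property of ellipses (the tangent at $o$ makes equal angles with $r(or_i)$ and $r(ob_i)$), which forces the three bisecting rays at $o$ to be arranged so that consecutive ones are less than $\pi$ apart. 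The contradiction is then obtained not by a single swap inequality but by a small graph argument: one colours the pairs $(p,q)$ black if $(p,q)\in\M$ and white if $o\in\D(pq)$, shows every vertex has a white edge (this is where the angular configuration of the bisectors is used), and deduces a colour-alternating cycle, which immediately yields a rematching of strictly larger total length. So your outline points in the right direction (Helly $\Rightarrow$ three pairs; assume empty intersection; locate a critical point; exhibit an improving cycle), but the substantive middle step requires the ellipse-tangent geometry rather than a radical-centre computation.
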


Fingerhut (see Eppstein~\cite{andy}), motivated by a problem in designing communication networks (see Fingerhut et al.~\cite{FingerhutST97}), conjectured that given a set $P$ of $2n$ uncolored points in the plane and a max-sum matching $\{(a_i,b_i),i=1,\dots,n\}$ of $P$, there exists a point $o$ of the plane, not necessarily a point of $P$, such that
\begin{equation}\label{eqFingerhut}
    \|a_i-o\|+\|b_i-o\|~\le~ \frac{2}{\sqrt{3}}~\|a_i-b_i\| \hspace{0.3cm} \textrm{for all } i\in\{1,\ldots,n\}, ~\textrm{where } 2/\sqrt{3}\approx 1.1547.
\end{equation}
Bereg et al.~\cite{bereg2022} obtained an approximation to this conjecture. They proved that for any point set $P$ of $2n$ uncolored points in the plane and a max-sum matching $\M=\{(a_i,b_i),i=1,\dots,n\}$ of $P$, all disks in $\D_{\M}$ have a common intersection, implying that any point $o$ in the common intersection satisfies
\[
	\|a_i-o\|+\|b_i-o\|~\le~ \sqrt{2}~\|a_i-b_i\|, ~\textrm{where } \sqrt{2}\approx 1.4142.
\]
Recently, Barabanshchikova and Polyanskii~\cite{barabanshchikova2022} confirmed the conjecture of Fingerhut.

The statement of Equation~\eqref{eqFingerhut} is equivalent to stating that the intersection $\E(a_1b_1)\cap \E(a_2b_2)\cap\dots\cap \E(a_nb_n)$ is not empty, where $\E(pq)$ is the region of the plane bounded by the ellipse with foci $p$ and $q$, and major axis length $(2/\sqrt{3})~\|p-q\|$ (see~\cite{andy}).

In our context of bichromatic point sets, given $p\in R$ and $q\in B$, let $\E(pq)$ denote the region bounded by the ellipse with foci $p$ and $q$, and major axis length $\sqrt{2}~\|p-q\|$. That is, $\E(pq)=\{x\in\mathbb{R}^2:\|p-x\|+\|q-x\|\le\sqrt{2}~\|p-q\|\}$. Then, the statement of Theorem~\ref{theo:main} is equivalent to stating that the intersection $\E(r_1b_1)\cap \E(r_2b_2)\cap\dots\cap \E(r_nb_n)$ is not empty, for any max-sum matching $\{(r_i,b_i),i=1,2,\ldots,n\}$ of $R\cup B$. 

We note that the factor $\sqrt{2}$ is tight. It suffices to consider two red points and two blue points as vertices of a square, so that each diagonal has vertices of the same color. The center of the square is the only point in common of the two ellipses induced by any max-sum matching. 

Hence, to prove Theorem~\ref{theo:main} it suffices to consider $n\le 3$, by Helly's Theorem. Let $X_1,X_2,\ldots,X_n$ be a collection of $n$ convex subsets of $\mathbb{R}^d$, with $n \ge d + 1$. Helly's Theorem~\cite{helly1923} asserts that if the intersection of every $d + 1$ of these subsets is nonempty, then the whole collection has a nonempty intersection. That is why we prove our claim only for $n\le 3$, since we are considering $n$ ellipses in $\mathbb{R}^2$. The arguments that we give in this paper are a simplification and adaptation of the arguments of Barabanshchikova and Polyanskii~\cite{barabanshchikova2022}.

Huemer et al.~\cite{HUEMER2019} proved that if $\M'$ is any perfect matching of $R$ and $B$ that maximizes the total \emph{squared} Euclidean distance of the matched points, i.e., it maximizes $\sum_{(p,q)\in\M'}\|p-q\|^2$, then all disks of $\D_{\M'}$ have a point in common. As proved by Bereg et al.~\cite{bereg2022}, the disks of our max-sum matching $\M$ of $R\cup B$ intersect pairwise, fact that will be used in this paper, but the common intersection is not always possible.  

\section{Proof of main result}\label{sec-main}

Let $R$ and $B$ be two disjoint point sets defined as above, where $|R|=|B|=n$, $n\le 3$, and let $\M$ be a max-sum matching of $R\cup B$. Note that for every pair $(p,q)\in\M$ the disk $\D(pq)$ is inscribed in the ellipse $\E(pq)$ (see Figure~\ref{fig:disk-ellipse}), which implies $\D(pq)\subset \E(pq)$. Then, for $n=2$ Theorem~\ref{theo:main} is true because the disks of $\M$ intersect pairwise~\cite[Proposition~2.1]{bereg2022}. Trivially, the theorem is also true for $n=1$. Therefore, we will prove in the rest of the paper that the theorem is also true for $n=3$, which will require elaborated arguments.

Let $n=3$, with $R=\{a,b,c\}$ and $B=\{a',b',c'\}$, and let $\M=\{(a,a'),(b,b'),(c,c')\}$ be a max-sum matching of $R\cup B$. 

For two points $p,q\in\mathbb{R}^2$, let $r(pq)$ denote the ray with apex $p$ that goes through $q$, and for a real number $\lambda\ge 1$, let $\E_\lambda(pq)$ be the region bounded by the ellipse with foci $p$ and $q$ and major axis length $\lambda\|p-q\|$. That is,  $\E_\lambda(pq)=\{x\in\mathbb{R}^2:\|p-x\|+\|q-x\|\le \lambda\|p-q\|\}$. Note that in our context $\E(pq)=\E_{\sqrt{2}}(pq)$, and $\E_\lambda(pq)\subset \E_{\lambda'}(pq)$ for any $\lambda'>\lambda$.

Assume by contradiction that $\E(aa')\cap \E(bb')\cap \E(cc')=\emptyset$. Then, we can ``inflate uniformly'' $\E(aa')$, $\E(bb')$, and $\E(cc')$ until they have a common intersection. Formally, we can take the minimum $\lambda>\sqrt{2}$ such that $\E_\lambda(aa')\cap \E_\lambda(bb')\cap \E_\lambda(cc')$ is not empty, case in which $\E_\lambda(aa')\cap \E_\lambda(bb')\cap \E_\lambda(cc')$ is singleton. Let $o$ denote the point of $\E_\lambda(aa')\cap \E_\lambda(bb')\cap \E_\lambda(cc')$.
 
Let $\ell(aa')$ denote the ray with apex $o$ that bisects $r(oa)$ and $r(oa')$. Similarly, we define $\ell(bb')$ and $\ell(cc')$. Let $t(aa')$ denote the line through $o$ tangent to $\E_\lambda(aa')$, oriented so that $\E_\lambda(aa')$ is to its right. Similarly, we define $t(bb')$ and $t(cc')$. It is well known that given an ellipse with foci $p$ and $q$, and a line tangent at it at some point $o$, the rays $r(op)$ and $r(oq)$ form equal angles with the tangent line (see Figure~\ref{fig:tangent-ellipse}). This implies that rays $\ell(aa')$, $\ell(bb')$, and $\ell(cc')$ are perpendicular to the tangent lines $t(aa')$, $t(bb')$, and $t(cc')$, respectively. In other words, they are contained respectively in the normal lines at point $o$.

Since $\E(aa')$, $\E(bb')$, and $\E(cc')$ intersect pairwise (and also none of them is contained inside other one), we have that $o$ belongs to the boundary of each of $\E_\lambda(aa')$, $\E_\lambda(bb')$, and $\E_\lambda(cc')$. Then, $\E_\lambda(aa')$, $\E_\lambda(bb')$, and $\E_\lambda(cc')$ intersect pairwise, and each pairwise intersection contains interior points. This implies that no two lines of $t(aa')$, $t(bb')$, and $t(cc')$ coincide. Furthermore, the six directions (positive and negative) of $t(aa')$, $t(bb')$, and $t(cc')$ alternate around $o$, which implies that any two consecutive rays among $\ell(aa')$, $\ell(bb')$, and $\ell(cc')$ counterclockwise around $o$, have rotation angle strictly less than $\pi$ (see Figure~\ref{fig:rays}).

\begin{figure}[t]
	\centering
	\subfloat[]{
		\includegraphics[scale=1,page=1]{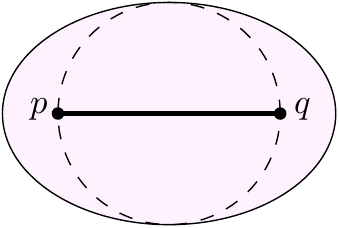}
		\label{fig:disk-ellipse}
	}~~~~
	\subfloat[]{
		\includegraphics[scale=1,page=10]{img.pdf}
		\label{fig:tangent-ellipse}
	}\\ 
	\subfloat[]{
		\includegraphics[scale=1,page=11]{img.pdf}
		\label{fig:rays}
	}
	\caption{\small{
			(a) The ellipse $\E(pq)$ and the disk $\D(pq)$.
			(b) A line tangent to an ellipse forms equal angles with the rays, whose apex is the tangency point, that go through the foci. 
			(c) Point $o$ and the three ellipses.
	}}
	\label{fig:lem:monotone-thing}
\end{figure}

Let $G=(R\cup B,E)$ be the bipartite graph such that $(p,q)\in E$ if and only if $p\in R$, $q\in B$, and either $(p,q)\in\{(a,a'),(b,b'),(c,c')\}$ or $o\in \D(pq)$. We color the edges into two colors: We say that edge $(p,q)$ is {\em black} if $(p,q)\in\{(a,a'),(b,b'),(c,c')\}$. Otherwise, we say that $(p,q)$ is {\em white}. Note that this color classification is consistent, since we have that $o\notin \D(pq)$ for all $(p,q)\in\{(a,a'),(b,b'),(c,c')\}$ because $\D(pq)$ is contained in the interior of $\E_\lambda(pq)$ and $o$ is in the boundary of $\E_\lambda(pq)$.

The proof of the next lemma is included for completeness.

\begin{lemma}[\cite{barabanshchikova2022}]\label{lemma:cycle}
If $G$ has a cycle whose edges are color alternating, then $\M$ is not a max-sum matching of $R\cup B$.  
\end{lemma}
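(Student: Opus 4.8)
The plan is to use a standard alternating-cycle exchange argument. Suppose $G$ contains a cycle $C$ whose edges alternate between black (matching) edges and white (non-matching) edges; since $G$ is bipartite and the two edge classes alternate, $C$ has even length, say $2k$, and consists of $k$ black edges $(p_1,q_1),\dots,(p_k,q_k)$ from $\M$ together with $k$ white edges that, after relabeling the indices cyclically, can be taken to be $(p_1,q_2),(p_2,q_3),\dots,(p_k,q_1)$, where each $p_j\in R$, each $q_j\in B$, and $o\in\D(p_jq_{j+1})$ for every $j$ (indices mod $k$). I would then consider the matching $\M'$ obtained from $\M$ by deleting the black edges of $C$ and inserting the white edges of $C$; this is again a perfect matching of $R\cup B$, since every vertex of $C$ has exactly one black and one white incident edge in $C$, so the swap preserves degrees.

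The key estimate is that swapping strictly increases the total length, contradicting maximality of $\M$. The engine is the elementary fact that if $o\in\D(pq)$, i.e.\ $o$ lies in the closed disk with diameter $pq$, then the angle $\angle poq\ge \pi/2$, hence by the law of cosines $\|p-q\|^2\ge\|p-o\|^2+\|q-o\|^2$, and in particular $\|p-q\|\ge\max\{\|p-o\|,\|q-o\|\}$; more to the point, $\|p-o\|+\|q-o\|\le\sqrt{2}\,\|p-q\|$, but what I actually need is the reverse-triangle style bound $\|p-q\|\ge \|p-o\|+ \|q-o\| - (\sqrt2-1)(\cdots)$ — wait, cleaner: I will instead use that for the black edges $o\notin\D(p_jq_j)$, so $\angle p_joq_j<\pi/2$ and thus $\|p_j-q_j\| < \|p_j - o\| + \|q_j - o\|$ is automatic from the triangle inequality but I need an upper bound on $\|p_j-q_j\|$ in terms of the distances to $o$. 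The right move is: by the triangle inequality $\|p_j - q_{j+1}\| \le \|p_j - o\| + \|o - q_{j+1}\|$ for the white edges, but since $o\in\D(p_jq_{j+1})$ the triangle $p_j o q_{j+1}$ has a right-or-obtuse angle at $o$, which gives the sharper $\|p_j-q_{j+1}\| \ge \sqrt{\|p_j-o\|^2 + \|q_{j+1}-o\|^2} \ge \tfrac{1}{\sqrt2}(\|p_j-o\| + \|q_{j+1}-o\|)$. Summing the white-edge lengths from below and the black-edge lengths from above and comparing is then the crux; I expect the clean inequality to come out of pairing, for each index $j$, the quantity $\|p_j - o\| + \|q_j - o\|$ (which bounds $\|p_j-q_j\|$ from above, here using that $o\in\E_\lambda(p_jq_j)$ with $\lambda>\sqrt2$ gives $\|p_j-o\|+\|q_j-o\| = \lambda\|p_j-q_j\|$, an equality since $o$ is on the boundary ellipse) against the white edges incident to $p_j$ and $q_j$.

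Concretely, I would write
\[
\sum_{j=1}^{k}\|p_j - q_{j+1}\| \;\ge\; \sum_{j=1}^{k}\frac{1}{\sqrt2}\bigl(\|p_j-o\| + \|q_{j+1}-o\|\bigr) \;=\; \frac{1}{\sqrt2}\sum_{j=1}^{k}\bigl(\|p_j-o\| + \|q_j-o\|\bigr),
\]
the last equality just reindexing the $q$-sum cyclically, while for the black edges, since $o$ lies on the boundary of $\E_\lambda(p_jq_j)$ with $\lambda>\sqrt2$,
\[
\sum_{j=1}^{k}\|p_j - q_j\| \;=\; \frac{1}{\lambda}\sum_{j=1}^{k}\bigl(\|p_j-o\| + \|q_j-o\|\bigr) \;<\; \frac{1}{\sqrt2}\sum_{j=1}^{k}\bigl(\|p_j-o\| + \|q_j-o\|\bigr).
\]
Chaining these two displays gives $\sum_j\|p_j-q_{j+1}\| > \sum_j\|p_j-q_j\|$, i.e.\ $\cost(\M') > \cost(\M)$, contradicting the assumption that $\M$ is a max-sum matching; this proves the lemma.

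\textbf{Main obstacle.} The substantive point is not any single inequality but getting the combinatorics of the alternating cycle exactly right: verifying that along an alternating cycle the white and black edges can be indexed so that each $p_j$ and each $q_j$ appears exactly once, so that the cyclic reindexing $\sum_j(\|p_j-o\|+\|q_{j+1}-o\|) = \sum_j(\|p_j-o\|+\|q_j-o\|)$ is legitimate. Once the cycle is parametrized correctly, the geometric inequalities ($o\in\D(pq)\Rightarrow \|p-q\|\ge\frac{1}{\sqrt2}(\|p-o\|+\|q-o\|)$, and the boundary-ellipse equality for black edges) are routine, and the strict inequality $\lambda>\sqrt2$ is exactly what delivers the contradiction with room to spare.
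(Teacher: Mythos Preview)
Your argument is correct and follows essentially the same alternating-cycle exchange as the paper: parametrize the cycle, use the equality $\|p_j-o\|+\|q_j-o\|=\lambda\|p_j-q_j\|$ on black edges, bound the white edges through $o$, reindex cyclically, and conclude $\cost(\M')>\cost(\M)$. The only cosmetic difference is where the strict inequality enters: the paper bounds white edges via the containment $\D(pq)\subset\operatorname{int}\E_\lambda(pq)$, giving $\|p-o\|+\|q-o\|<\lambda\|p-q\|$ directly, whereas you use the right-angle criterion $\|p-q\|\ge\tfrac{1}{\sqrt2}(\|p-o\|+\|q-o\|)$ and then invoke $\lambda>\sqrt2$; both routes yield the same chain.
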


\begin{proof}
For a black edge $(p,q)$ we have that $\|p-o\|+\|q-o\|=\lambda\|p-q\|$. For a white edge $(p,q)$ we have that $\|p-o\|+\|q-o\|<\lambda\|p-q\|$, since $o\in\D(pq)$ and $\D(pq)$ is contained in the interior of $\E_\lambda(pq)$. Let $(r_1,b_1,r_2,b_2,\ldots,r_m,b_m,r_{m+1}=r_1)$ be a color alternating cycle of length $m$, where $r_1,\ldots,r_m\in R$ and $b_1,\ldots,b_m\in B$. Suppose w.l.o.g.\ that the edge $(r_1,b_1)$ is black, which means that the edges $(r_1,b_1),\ldots,(r_m,b_m)\in \M$ are all black, and the edges $(b_1,r_2),\ldots,(b_m,r_{m+1})\in \M$ are all white. Then, we have that:
\[
	\sum_{i=1}^m\|r_i-b_i\| = \frac{1}{\lambda}\sum_{i=1}^m\left(\|r_i-o\|+\|b_i-o\|\right) =
	\frac{1}{\lambda}\sum_{i=1}^m\left(\|b_i-o\|+\|r_{i+1}-o\|\right) <
	\sum_{i=1}^m\|b_i-r_{i+1}\|.
\]
Hence, by replacing in $\M$ the black edges of the cycle by the white edges, we will obtain a matching of larger total sum. 
\end{proof}

\begin{lemma}\label{lemma:white-incident}
Each vertex of $G$ has at least one white edge incident to it.
\end{lemma}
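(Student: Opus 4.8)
The plan is to argue by contradiction: assuming some vertex of $G$ carries no incident white edge, we exhibit a color-alternating cycle in $G$, which contradicts Lemma~\ref{lemma:cycle} since $\M$ is a max-sum matching. The construction of $o$, of $G$, and of the black/white coloring is invariant (after relabeling) under swapping $R$ and $B$ and under permuting the three matched pairs, so we may assume the offending vertex is $a\in R$. By the definition of $G$ this means $o\notin\D(aq)$, i.e.\ $\angle aoq<\pi/2$, for every blue point $q\in\{a',b',c'\}$; writing $\hat p:=(p-o)/\|p-o\|$, this says $\langle\hat a,\hat q\rangle>0$ for $q\in\{a',b',c'\}$, that is, $a',b',c'$ all lie strictly in the open half-plane $H$ bounded by the line through $o$ orthogonal to $oa$ and containing $a$.

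I would then restrict which alternating cycles are possible. Since $a$ carries no white edge, its only edge in $G$ is the black edge $aa'$; hence no color-alternating cycle can pass through $a$, so none can use $aa'$, so none can pass through $a'$ either (whose only black edge is $aa'$). Consequently any alternating cycle of $G$ lies on $\{b,b',c,c'\}$, and the only possibility is the cycle on these four vertices with edges $bb'$ (black), $b'c$, $cc'$ (black), $c'b$; it is alternating if and only if $b'c$ and $c'b$ are white. So it suffices to prove $\angle boc'\ge\pi/2$ and $\angle cob'\ge\pi/2$ (equivalently $o\in\D(bc')$ and $o\in\D(cb')$), and then invoke Lemma~\ref{lemma:cycle} to get the contradiction.

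For these two inequalities I would use angular coordinates at $o$, placing $\ell(aa'),\ell(bb'),\ell(cc')$ at angles $\alpha,\beta,\gamma$ taken counterclockwise so that all three consecutive gaps lie in $(0,\pi)$ --- exactly the property, noted above, that consecutive such rays have rotation angle less than $\pi$. Since $\ell(aa')$ bisects the rays $oa$ and $oa'$, the points $a,a'$ sit at angles $\alpha\pm\theta_a$, and likewise $b,b'$ at $\beta\pm\theta_b$ and $c,c'$ at $\gamma\pm\theta_c$, with $\theta_\bullet\in(0,\pi/4)$ because $\angle r_iob_i=2\theta_\bullet<\pi/2$ (as $o\notin\D(r_ib_i)$). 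Thus $\hat a$ lies within $\pi/4$ of $\alpha$, and each of $b',c'$ lies within $\pi/4$ of $\beta$, resp.\ $\gamma$. Using $b',c'\in H$ together with the fact that $\alpha,\beta,\gamma$ cannot all lie in a single closed half-plane, one pins down the positions of $\beta$ and $\gamma$ --- roughly, that both lie at angular distance $\ge\pi/2$ from $\hat a$ and on opposite sides of the line through $o$ and $a$, so that the foci $b,c$ (the ones of their pairs lying on the far side of $\beta$, resp.\ $\gamma$, from $\hat a$) lie at angular distance strictly more than $\pi/2$ from $\hat a$, still on opposite sides of that line --- enough to then chase the angles around $o$ and conclude $\angle boc'\ge\pi/2$ and $\angle cob'\ge\pi/2$.

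The heart of the proof, and the step I expect to be the main obstacle, is precisely this last angular argument: turning ``$a',b',c'$ all lie in the half-plane $H$ in front of $a$'' into ``$\angle boc'\ge\pi/2$ and $\angle cob'\ge\pi/2$''. It forces one to combine simultaneously the half-plane constraint on the blue points, the positive spanning of $\ell(aa'),\ell(bb'),\ell(cc')$ around $o$, and the strict bound $\theta_\bullet<\pi/4$, and I expect it is cleanest to carry out by a short case analysis on the cyclic order of the six points $a,a',b,b',c,c'$ around $o$.
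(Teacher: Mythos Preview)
Your plan coincides with the paper's: reduce by the color/label symmetry to one vertex, observe that all three opposite-color points lie in the open half-plane $H$ through $o$ determined by that vertex, conclude that any alternating cycle must be the four-cycle on $\{b,b',c,c'\}$, and then show $\angle boc'\ge\pi/2$ and $\angle cob'\ge\pi/2$ by an angular argument using $\theta_\bullet<\pi/4$ and the fact that $\ell(aa'),\ell(bb'),\ell(cc')$ positively span at $o$. All of this is correct and matches the paper.

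The only soft spot is the part you yourself flag as the obstacle. Your heuristic that ``$\beta$ and $\gamma$ lie at angular distance $\ge\pi/2$ from $\hat a$'' is not the right organizing principle and is not true in general. The paper (working with the color-swapped vertex $a'$) instead puts the offending vertex on the positive $y$-axis, uses the fixed cyclic order of the $\ell$-rays to force the two remaining same-color points into $Q_1$ and $Q_2$ respectively, and then argues that at least one of the two remaining opposite-color points must lie in $Q_3\cup Q_4$ (again from the cyclic-order constraint). This yields a three-case split: if both lie in $Q_3,Q_4$ the two desired right angles are immediate from quadrant membership; if only one does, the other is pinned to $Q_1$ (resp.\ $Q_2$) and one uses the estimate $\angle boc'\ge 2\pi-\theta_b-\theta_c-\theta>\pi/2$, where $\theta<\pi$ is the relevant gap between consecutive $\ell$-rays and $\theta_b,\theta_c<\pi/4$. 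So your outline is sound, but the endgame should be this quadrant-based trichotomy rather than a bound on the bisector angles $\beta,\gamma$.
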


\begin{proof}
Consider the blue vertex $a'$. Assume w.l.o.g.\ that $o$ is the origin of coordinates, and $a'$ is in the positive direction of the $y$-axis. We have that $\angle aoa'<\pi/2$ because $o\notin \D(aa')$, then assume w.l.o.g.\ that $a$ is in the interior of the first quadrant $Q_1$. Let $Q_2$, $Q_3$, and $Q_4$ be the second, third, and fourth quadrants, respectively. Further assume w.l.o.g.\ that rays $\ell(aa')$, $\ell(bb')$, and $\ell(cc')$ appear in this order counterclockwise.

Assume by contradiction that there is no white edge incident to $a'$. This implies that $b,c$ belong to the interior of $Q_1\cup Q_2$. 
If $c\in Q_2$, then the counterclockwise rotation angle from $\ell(cc')$ to $\ell(aa')$ is larger than $\pi$. Hence, $c\in Q_1$. 
If $b\in Q_1$, then the counterclockwise rotation angle from $\ell(aa')$ to $\ell(bb')$, or that from $\ell(bb')$ to $\ell(cc')$, is larger than $\pi$. Hence $b\in Q_2$. 
Furthermore, if both $b'$ and $c'$ belong to $Q_1\cup Q_2$, then the counterclockwise rotation angle from $\ell(bb')$ to $\ell(cc')$ is larger than $\pi$. Hence, at least one of $b',c'$ belong to the interior of $Q_3\cup Q_4$. That is, $b'\in Q_3$ and/or $c'\in Q_4$. The proof is divided now into three cases:

{\bf Case 1:} $b'\in Q_3$ and $c'\in Q_4$. Since $b\in Q_2$ and $c'\in Q_4$, the angle $\angle boc'\ge \pi/2$, which implies that $o\in \D(bc')$ (see Figure~\ref{fig:case1}). That is, edge $(b,c')$ is white. Similarly, edge $(b',c)$ is also white. The colors of the edges of the cycle $(b,c',c,b',b)$ alternate, then Lemma~\ref{lemma:cycle} implies a contradiction.

{\bf Case 2:} $b'\in Q_3$ and $c'\notin Q_4$. Since the counterclockwise rotation angle $\theta$ from $\ell(bb')$ to $\ell(cc')$ is smaller than $\pi$, we must have that $c'\in Q_1$. As in Case~1, we have that edge $(b',c)$ is white, given that $b'\in Q_3$ and $c\in Q_1$. Let $\beta$ be the half of the angle between rays $r(ob)$ and $r(ob')$, and $\gamma$ the half of the angle between the rays $r(oc)$ and $r(oc')$ (see Figure~\ref{fig:case2}). We have that $\beta,\gamma<\pi/4$, which implies that $\angle boc'\ge 2\pi-\beta-\gamma-\theta\ge \pi/2$. Hence, edge $(b,c')$ is also white. Again, the colors of the edges of the cycle $(b,c',c,b',b)$ alternate, and Lemma~\ref{lemma:cycle} implies a contradiction.

{\bf Case 3:} $b'\notin Q_3$ and $c'\in Q_4$. The proof of this case is analogous to that of Case~2.

The lemma thus follows.  
\end{proof}

\begin{figure}[t]
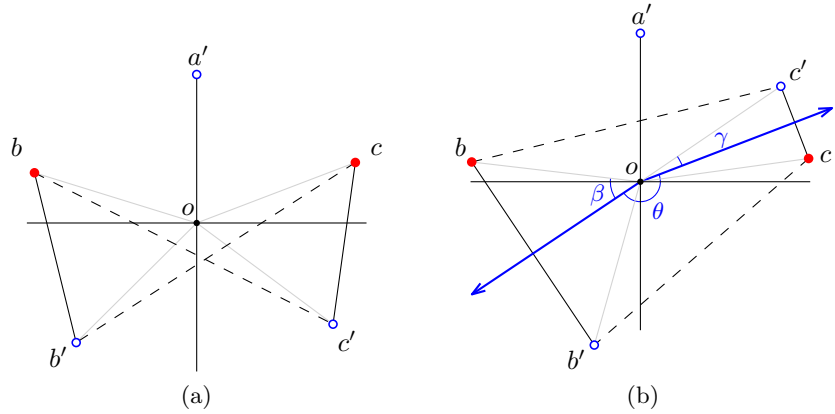

	\centering
	\subfloat[]{
		\includegraphics[scale=1,page=12]{img.pdf}
		\label{fig:case1}
	}~~~~
	\subfloat[]{
		\includegraphics[scale=1,page=13]{img.pdf}
		\label{fig:case2}
	}
	\caption{\small{
			Proof of Lemma~\ref{lemma:white-incident}. Black edges are in normal line style, and white edges in dashed style.
	}}
	\label{fig:proof-white-incident}
\end{figure}

Lemma~\ref{lemma:white-incident} implies that graph $G$ has always a cycle (of length four or six) whose edges are color alternating. Hence, Lemma~\ref{lemma:cycle} implies a contradiction, and we obtain that the max-sum matching $\M$ ensures that $\E(aa')\cap \E(bb')\cap \E(cc')\neq\emptyset$. Therefore, Theorem~\ref{theo:main} holds.

\small


\bibliographystyle{abbrv}
\bibliography{refs}

\begin{thebibliography}{1}

\bibitem{barabanshchikova2022}
P.~Barabanshchikova and A.~Polyanskii.
\newblock Intersecting ellipses induced by a max-sum matching.
\newblock {\em arXiv preprint arXiv:2212.14200}, 2022.

\bibitem{bereg2022}
S.~Bereg, O.~P. Chac{\'o}n-Rivera, D.~Flores-Pe{\~n}aloza, C.~Huemer,
  P.~P{\'e}rez-Lantero, and C.~Seara.
\newblock On maximum-sum matchings of points.
\newblock {\em Journal of Global Optimization}, pages 1--18, 2022.

\bibitem{andy}
D.~Eppstein.
\newblock Geometry {J}unkyard.
\newblock \url{https://www.ics.uci.edu/~eppstein/junkyard/maxmatch.html}.

\bibitem{FingerhutST97}
J.~A. Fingerhut, S.~Suri, and J.~S. Turner.
\newblock Designing least-cost nonblocking broadband networks.
\newblock {\em J. Algorithms}, 24(2):287--309, 1997.

\bibitem{helly1923}
E.~Helly.
\newblock {\"U}ber mengen konvexer k{\"o}rper mit gemeinschaftlichen punkte.
\newblock {\em Jahresbericht der Deutschen Mathematiker-Vereinigung},
  32:175--176, 1923.

\bibitem{HUEMER2019}
C.~Huemer, P.~P\'erez-Lantero, C.~Seara, and R.~I. Silveira.
\newblock Matching points with disks with a common intersection.
\newblock {\em Discrete Mathematics}, 342(7):1885--1893, 2019.

\end{thebibliography}

\end{document}